 \newtheorem{theorem}{Theorem}[section]
 \newtheorem{lemma}[theorem]{Lemma}
 \theoremstyle{definition}
 \theoremstyle{remark}
 \newtheorem{remark}[theorem]{Remark}
 \newtheorem{example}{Example}
 \numberwithin{equation}{section}
\begin{document}
\title[Constrained energy problems]
 {Constrained energy problems\\ with external fields}
\author[N.~Zorii]{Natalia Zorii}

\address{%
Institute of Mathematics\\
National Academy of Sciences of Ukraine\\
3 Tereshchenkivska Str.\\
01601, Kyiv-4\\
Ukraine}

\email{natalia.zorii@gmail.com}

\subjclass{31C15}

\keywords{Constrained energy problems, external fields, equilibrium
measures, variational inequalities for weighted equilibrium
potentials}

\date{December 31, 2009}

\begin{abstract}
Given a positive definite kernel in a locally compact space~$\mathrm
X$, a closed set~$\Sigma$, a measure $\sigma\geqslant0$, and a
positive continuous~$g$, we study the minimal energy problem in the
presence of an external field~$f$ over the class of all measures
$\nu\geqslant0$ supported by~$\Sigma$ and such that $\int
g\,d\nu=1$, $\sigma-\nu\geqslant0$. Under general assumptions, we
establish the existence of a minimizing
measure~$\lambda^\sigma_\Sigma$ and analyze its continuity
properties in the weak$*$ and strong topologies when $\sigma$
and~$\Sigma$ are varied. We also give a description of the
$f$-weighted potential of~$\lambda^\sigma_\Sigma$ and single out its
characteristic properties. Such results are mostly new even for
classical kernels in $\mathbb R^n$, which is important in
applications.
\end{abstract}

\maketitle

\section{Introduction and statement of the problem}
Let $\mathrm X$ be a locally compact Hausdorff space and $\mathfrak
M=\mathfrak M(\mathrm X)$ the linear space of all real-valued Radon
measures~$\nu$ on~$\mathrm X$ equipped with the {\it vague\/}
\mbox{(${}=${\it weak}$*$)} topology, i.\,e., the topology of
pointwise convergence on the class of all real-valued continuous
functions on~$\mathrm X$ with compact support.

A {\it kernel\/}~$\kappa$ on $\mathrm X$ is meant to be an element
from $\mathrm\Phi(\mathrm X\times\mathrm X)$, where
$\mathrm\Phi(\mathrm Y)$ consists of all lower semicontinuous
functions~$\psi:\mathrm Y\to(-\infty,\infty]$ such that
$\psi\geqslant0$ unless $\mathrm Y$ is compact. Given
$\nu,\,\mu\in\mathfrak M$, the {\it mutual energy\/} and the {\it
potential\/} with respect to the kernel~$\kappa$ are defined by
\[\kappa(\nu,\mu):=\int\kappa(x,y)\,d(\nu\otimes\mu)(x,y)\quad\mbox{and
\ }\kappa_\nu(\cdot):=\int\kappa(\cdot,y)\,d\nu(y),\] respectively.
(Here and in the sequel, when introducing notation, we shall always
tacitly assume the corresponding object on the right to be well
defined.) For $\nu=\mu$ we get the {\it energy\/} $\kappa(\nu,\nu)$
of~$\nu$. Let $\mathcal E$ consist of all $\nu\in\mathfrak M$ with
$-\infty<\kappa(\nu,\nu)<\infty$.

In this work we consider a {\it positive definite\/}
kernel~$\kappa$, which means that it is symmetric (i.\,e.,
$\kappa(x,y)=\kappa(y,x)$ for all $x,\,y\in\mathrm X$) and the
energy $\kappa(\nu,\nu)$, $\nu\in\mathfrak M$, is nonnegative
whenever defined. Then $\mathcal E$ forms a pre-Hil\-bert space with
the scalar product $\kappa(\nu,\mu)$ and the seminorm
$\|\nu\|:=\sqrt{\kappa(\nu,\nu)}$ (see~\cite{F1}). The topology
on~$\mathcal E$ defined by means of this seminorm is called {\it
strong\/}.

For an arbitrary closed set $E\subset\mathrm X$, let $\mathfrak
M^+(E)$ consist of all nonnegative $\nu\in\mathfrak M$ with the
support $S_\nu\subset E$, and let $\mathcal E^+(E):=\mathfrak
M^+(E)\cap\mathcal E$. Given a measure~$\nu$ and a function~$\psi$,
for the sake of brevity we shall write
$\langle\psi,\nu\rangle:=\int\psi\,d\nu$.

Fix an {\it external field\/}~$f$. We assume that either
$f\in\mathrm\Phi(\mathrm X)$ (Case~I), or $f=\kappa_\zeta$, where
$\zeta\in\mathcal E$ is a {\it signed\/} measure (Case~II). Then the
$f$-{\it weighted potential\/}~$W^f_\nu$ and the $f$-{\it weighted
energy\/}~$G_f(\nu)$ of $\nu\in\mathcal E$ are respectively given by
the formulas
\[W^f_\nu(x):=\kappa_\nu(x)+f(x),\qquad G_f(\nu):=\|\nu\|^2+2\langle f,\nu\rangle=
\langle W^f_\nu+f,\nu\rangle.\] Note that $W^f_\nu$, $\nu\in\mathcal
E$, is defined and ${}\ne-\infty$ at least {\it nearly everywhere\/}
(n.\,e.) in~$\mathrm X$, that is, except at most for some set
$N\subset\mathrm X$ with the interior capacity $C(N)=0$.

Having fixed also a nonempty closed set $\Sigma\subset\mathrm X$, we
consider a function~$g>0$, defined and continuous at least in some
open neighborhood~$U_\Sigma$ of~$\Sigma$, and a measure
$\sigma\in\mathfrak M^+(\Sigma)$ which will serve as a {\it
constraint\/}.

We are interested in the {\it constrained minimal $f$-weighted
energy problem\/}
\begin{equation}\label{gauss}G_f^\sigma(\Sigma,g):=\inf_{\nu\in\mathcal
E^\sigma(\Sigma,g)}\,G_f(\nu),\end{equation} where \[\mathcal
E^\sigma(\Sigma,g):=\bigl\{\nu\in\mathcal E^+(\Sigma):\ \langle
g,\nu\rangle=1, \ \nu\leqslant\sigma\bigr\}\] and
$\nu\leqslant\sigma$ means that $\sigma-\nu\geqslant0$.
(In~(\ref{gauss}), as usual, the infimum over the empty set is taken
to be~$+\infty$.) Along with its electrostatic interpretation, this
problem has also found applications in approximation
theory~(see~\cite{D,DS,R}). If
\begin{equation}\label{finite}G_f^\sigma(\Sigma,g)<\infty\end{equation}
(or, which is equivalent, if the class
\begin{equation}\label{gf}\mathcal
E_f^\sigma(\Sigma,g):=\bigl\{\nu\in\mathcal E^\sigma(\Sigma,g):\
G_f(\nu)<\infty\bigr\}\end{equation} is nonempty), then we shall
consider the problem on the existence of
$\lambda_\Sigma^\sigma\in\mathcal E^\sigma(\Sigma,g)$ with minimal
$f$-weighted energy
$G_f(\lambda_\Sigma^\sigma)=G_f^\sigma(\Sigma,g)$. Such a
$\lambda_\Sigma^\sigma$ (if exists) will be called an {\it
equilibrium measure\/} corresponding to the data~$\kappa$, $\Sigma$,
$\sigma$, $g$, and~$f$.

If $\mathrm X=\mathbb R^2$, $\kappa(x,y)=-\log|x-y|$, and $g=1$, the
constrained energy problem has been analyzed by P.~Dragnev, E.~Saff
and E.~Rakhmanov: see~\cite{D,DS}, where $f\in\mathrm\Phi(\mathbb
R^2)$ is fast growing at infinity, and~\cite{R}, where
$\Sigma=[-1,1]$ and $f=0$.

However, the methods applied in this note and the results obtained
differ essentially from those in~\cite{D,DS,R}. Namely, our approach
is mainly based on the use of both the strong and vague topologies,
which enables us in both Cases~I and~II to establish the existence
of an equilibrium measure~$\lambda_\Sigma^\sigma$ for
noncompact~$\Sigma$ and to study continuity properties
of~$\lambda_\Sigma^\sigma$ as a function of~$(\Sigma,\sigma)$. We
also obtain variational inequalities for the $f$-weighted
equilibrium potential $W_\lambda^f$ and single out its
characteristic properties, modifying properly the arguments
from~\cite{D,DS,R}.

For the sake of simplicity we shall restrict ourselves to the case
where either $\mathrm X$ is a countable union of compact sets or
$\inf_{x\in\mathrm X}\,g(x)>0$. Then the concept of local
$\nu$-negligibility and that of $\nu$-negligibility coincide for any
$\nu\geqslant0$ with $\langle g,\nu\rangle<\infty$; hence, every $N$
with $C(N)=0$ is $\nu$-negligible if, moreover, $\nu\in\mathcal E$.

Before formulating the results obtained, we observe the following
lemma.

\begin{lemma}\label{minuss} $G_f^\sigma(\Sigma,g)>-\infty$.\end{lemma}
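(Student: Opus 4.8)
The quantity to bound below is
\[
G_f^\sigma(\Sigma,g)=\inf_{\nu\in\mathcal E^\sigma(\Sigma,g)}\bigl(\|\nu\|^2+2\langle f,\nu\rangle\bigr).
\]
If the class $\mathcal E^\sigma(\Sigma,g)$ is empty the infimum is $+\infty$ by convention, so assume it is nonempty. Since $\|\nu\|^2\geqslant0$, the only way $G_f(\nu)$ could be driven to $-\infty$ is through the linear term $2\langle f,\nu\rangle$; hence the whole task is to control $\langle f,\nu\rangle$ from below uniformly over $\nu\in\mathcal E^\sigma(\Sigma,g)$. I would treat the two cases separately.

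\emph{Case I: $f\in\mathrm\Phi(\mathrm X)$.} Here $f$ is lower semicontinuous and, by definition of $\mathrm\Phi$, either $f\geqslant0$ everywhere (if $\mathrm X$ is noncompact) or $\mathrm X$ is compact. In the first situation $\langle f,\nu\rangle\geqslant0$, so $G_f(\nu)\geqslant0$ and we are done with $G_f^\sigma(\Sigma,g)\geqslant0>-\infty$. In the second situation $\mathrm X$ is compact, hence $f$, being l.s.c.\ on a compact space, is bounded below, say $f\geqslant -c$ with $c\in[0,\infty)$; then $\langle f,\nu\rangle\geqslant -c\langle 1,\nu\rangle$. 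Because $\langle g,\nu\rangle=1$ and $g$ is continuous and strictly positive on the compact set $\Sigma$, we have $m:=\min_\Sigma g>0$ and therefore $\langle 1,\nu\rangle\leqslant 1/m$ for every admissible $\nu$. Consequently $G_f(\nu)\geqslant -2c/m$, a finite lower bound independent of $\nu$.

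\emph{Case II: $f=\kappa_\zeta$ with $\zeta\in\mathcal E$ signed.} Write $\zeta=\zeta^+-\zeta^-$ for the Jordan decomposition; both $\zeta^\pm$ lie in $\mathcal E$ (since $\mathcal E$ is a vector lattice under the present hypotheses, or more elementarily since $\|\zeta^+\|,\|\zeta^-\|<\infty$ follows from $\zeta\in\mathcal E$ together with positive definiteness). For $\nu\in\mathcal E^\sigma(\Sigma,g)\subset\mathcal E^+(\Sigma)$ one has, by Fubini and the definition of the potential,
\[
\langle f,\nu\rangle=\kappa(\nu,\zeta)=\kappa(\nu,\zeta^+)-\kappa(\nu,\zeta^-)\geqslant-\kappa(\nu,\zeta^-)\geqslant-\|\nu\|\,\|\zeta^-\|,
\]
the last step by the Cauchy–Schwarz inequality valid in the pre-Hilbert space $\mathcal E$. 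Hence
\[
G_f(\nu)=\|\nu\|^2+2\langle f,\nu\rangle\geqslant\|\nu\|^2-2\|\nu\|\,\|\zeta^-\|=\bigl(\|\nu\|-\|\zeta^-\|\bigr)^2-\|\zeta^-\|^2\geqslant-\|\zeta^-\|^2.
\]
Taking the infimum over $\nu$ gives $G_f^\sigma(\Sigma,g)\geqslant-\|\zeta^-\|^2>-\infty$.

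\emph{Main obstacle.} The routine-looking estimates hide one genuine point that must be checked: the manipulations $\langle f,\nu\rangle=\kappa(\nu,\zeta)$ and the splitting into $\kappa(\nu,\zeta^\pm)$ require all these mutual energies to be well defined (no $\infty-\infty$), which is why it is essential that $\nu\in\mathcal E$ and that $\zeta^\pm\in\mathcal E$; once finiteness of $\|\nu\|$ and $\|\zeta^-\|$ is in hand, Cauchy–Schwarz applies verbatim. Thus the crux is the bookkeeping on energies and the use of the pre-Hilbert structure of $\mathcal E$ from~\cite{F1}, rather than any delicate analysis; the compact-$\mathrm X$ subcase of Case~I is the only place where the constraint $\langle g,\nu\rangle=1$ and positivity of $g$ on $\Sigma$ are actually used.
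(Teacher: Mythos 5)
Your Case~I argument is essentially the paper's own: $f\geqslant0$ when $\mathrm X$ is noncompact, and otherwise a lower bound $f\geqslant-c$ combined with the mass bound $\nu(\mathrm X)\leqslant 1/\min g$ coming from $\langle g,\nu\rangle=1$ (your use of $\min_\Sigma g$ rather than $\min_{\mathrm X}g$ is if anything slightly more careful, since $g$ is only assumed defined near~$\Sigma$). In Case~II you diverge from the paper, which simply completes the square: $G_f(\nu)=\|\nu\|^2+2\kappa(\nu,\zeta)=\|\nu+\zeta\|^2-\|\zeta\|^2\geqslant-\|\zeta\|^2$, using positive definiteness applied to the signed measure $\nu+\zeta\in\mathcal E$ --- one line, no Jordan decomposition, no Cauchy--Schwarz. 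Your route reaches the same finite bound but two intermediate steps deserve care. First, $\zeta^{\pm}\in\mathcal E$ is not a formal consequence of ``positive definiteness'' alone; it is built into the convention under which the energy of a signed measure is declared well defined (Fuglede requires convergence of the integral against $|\zeta|\otimes|\zeta|$, and that is what yields $\|\zeta^{\pm}\|<\infty$). Second, the inequality $\kappa(\nu,\zeta^+)-\kappa(\nu,\zeta^-)\geqslant-\kappa(\nu,\zeta^-)$ tacitly uses $\kappa(\nu,\zeta^+)\geqslant0$, which is guaranteed by $\kappa\geqslant0$ only when $\mathrm X$ is noncompact; for compact $\mathrm X$ the kernel may take negative values and this step can fail. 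Both issues evaporate if you apply Cauchy--Schwarz directly to the pair $\nu,\zeta\in\mathcal E$, giving $|\kappa(\nu,\zeta)|\leqslant\|\nu\|\,\|\zeta\|$ and hence $G_f(\nu)\geqslant\bigl(\|\nu\|-\|\zeta\|\bigr)^2-\|\zeta\|^2\geqslant-\|\zeta\|^2$, which is in substance the paper's bound; so the proof is correct after this small repair, at the cost of a marginally weaker constant ($-\|\zeta\|^2$ in place of your intended $-\|\zeta^-\|^2$).
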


\begin{proof} Indeed, in Case~II it is an immediate consequence
of the representation
\begin{equation}\label{repres}
G_f(\nu)=\|\nu\|^2+2\kappa(\nu,\zeta)=\|\nu+\zeta\|^2-\|\zeta\|^2,
\quad\nu\in\mathcal E.\end{equation} Let Case~I take place. If
$\mathrm X$ is compact, then $f\in\mathrm\Phi(\mathrm X)$ is bounded
from below by~$-c$, where $c>0$, while $\nu(\mathrm
X)\leqslant\bigl[\min_{x\in\mathrm X}\,g(x)\bigr]^{-1}<\infty$ for
all $\nu\in\mathcal E^\sigma(\Sigma,g)$, and the required inequality
follows. Otherwise, $f$ has to be ${}\geqslant0$; hence,
$G_f^\sigma(\Sigma,g)\geqslant0$.\end{proof}

\section{Main results}

Following~\cite{F1}, we call a (positive definite) kernel~$\kappa$
{\it perfect\/} if $\mathcal E^+:=\mathcal E^+(\mathrm X)$, treated
as a topological subspace of~$\mathcal E$, is strongly complete and
the strong topology on~$\mathcal E^+$ is finer than the induced
vague topology. It follows that a perfect kernel has to be {\it
strictly positive definite\/}, and the seminorm~$\|\cdot\|$ is then
actually a norm.

\begin{remark} It is well known  (see,
e.\,g.,~\cite{E1,F1,L}) that the class of perfect kernels includes
the Riesz kernels $|x-y|^{\alpha-n}$, $0<\alpha<n$, in~$\mathbb
R^n$, $n\geqslant2$ (in particular, the Newtonian kernel
$|x-y|^{2-n}$ in~$\mathbb R^n$, $n\geqslant3$), the restriction of
the logarithmic kernel $-\log\,|x-y|$ in~$\mathbb R^2$ to the open
unit disk, and the Green kernel~$g_D$, where $D$ is an open set
in~$\mathbb R^n$, $n\geqslant 2$, and~$g_D$ is its generalized Green
function.\end{remark}

Let $\nu_E$ denote the trace of $\nu\in\mathfrak M$ upon a
$\nu$-measurable set~$E$.

\begin{theorem}\label{exist} Assume {\rm(\ref{finite})} to hold, $\kappa$ to be perfect,
and let $C(\Sigma)$ be finite\,\footnote{Even for the Newtonian
kernel, sets of finite capacity might be noncompact
(see~\cite{L}).}. If, moreover, either $g|_{U_\Sigma}$ is bounded or
there exist $r\in(1,\infty)$ and $\omega\in\mathcal E$ such that
\begin{equation}
g^{r}(x)\leqslant\kappa_\omega(x)\quad\mbox{n.\,e. in \ } U_\Sigma,
\label{growth}
\end{equation}
then the following assertions hold true:
\begin{itemize}
\item[\rm(a)] There exists a unique equilibrium measure
$\lambda_\Sigma^\sigma$.\smallskip
\item[\rm(b)] Let $\Sigma_s\subset U_\Sigma$, $s\in S$, be a decreasing ordered family of
closed sets such that $C(\Sigma_s)<\infty$ and $\bigcap_{s\in
S}\,\Sigma_s=\Sigma$. Let $\sigma_s\in\mathfrak M^+(\Sigma_s)$,
$s\in S$, decrease and converge vaguely to~$\sigma$. Then
\begin{equation}\label{sigma}
G^\sigma_{f}(\Sigma,g)=\lim_{s\in
S}\,G^{\sigma_s}_{f}(\Sigma_s,g)\end{equation} and
$\lambda_{\Sigma_s}^{\sigma_s}\to\lambda_{\Sigma}^{\sigma}$ strongly
(hence, also vaguely).\smallskip
\item[\rm(c)] Let $\{K\}$ be the increasing ordered family of all compact subsets of~$\Sigma$.
Then there exists a net $(\beta^*_{K})_{K\in
\{K\}}\subset(1,\infty)$ that decreases to~$1$ and such that, for
all $\beta_{K}\in[1,\beta^*_{K}]$,
\begin{equation}
G^\sigma_{f}(\Sigma,g)=\lim_{K\uparrow\Sigma}\,
G^{\beta_K\sigma_K}_f(K,g).\label{contnew}
\end{equation}
Furthermore, $\lambda^{\beta_K\sigma_K}_K\to\lambda^{\sigma}_\Sigma$
strongly (and, hence, vaguely).
\end{itemize}
\end{theorem}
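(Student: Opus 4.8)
The plan is to establish (a) first, via the standard strong-completeness argument, and then to derive (b) and (c) as consequences of a general convergence principle for equilibrium measures under perturbation of the data.

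For part (a), I would argue as follows. By hypothesis~(\ref{finite}) the class $\mathcal E_f^\sigma(\Sigma,g)$ is nonempty, so a minimizing sequence $(\nu_k)\subset\mathcal E_f^\sigma(\Sigma,g)$ exists. The key computation is the parallelogram-type identity: for $\nu,\mu\in\mathcal E^\sigma(\Sigma,g)$, the midpoint $(\nu+\mu)/2$ again lies in $\mathcal E^\sigma(\Sigma,g)$ (convexity of the constraints $\langle g,\cdot\rangle=1$ and $\cdot\leqslant\sigma$), and using the representation of $G_f$ — in Case~II directly from~(\ref{repres}), in Case~I by writing $G_f(\nu)=\|\nu\|^2+2\langle f,\nu\rangle$ — one gets
\[
\|\nu_k-\nu_l\|^2 = 2G_f(\nu_k)+2G_f(\nu_l)-4G_f\!\left(\tfrac{\nu_k+\nu_l}{2}\right)\leqslant 2G_f(\nu_k)+2G_f(\nu_l)-4G_f^\sigma(\Sigma,g)\to0,
\]
so $(\nu_k)$ is strongly Cauchy. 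Here is the first place the hypotheses on $g$ and $C(\Sigma)<\infty$ enter: strong completeness of $\mathcal E^+$ (perfectness) gives a strong limit $\lambda\in\mathcal E^+$, but one must check $\lambda\in\mathcal E^\sigma(\Sigma,g)$, i.e. that the constraints survive the limit. Since the kernel is perfect, strong convergence implies vague convergence, so $\lambda\in\mathfrak M^+(\Sigma)$ and $\lambda\leqslant\sigma$ (the cone $\{\mu:\mu\leqslant\sigma\}$ is vaguely closed). The delicate point is $\langle g,\lambda\rangle=1$: vague convergence does not control $\int g\,d\nu_k$ when $g$ is unbounded, and this is exactly what the boundedness of $g|_{U_\Sigma}$ or the growth condition~(\ref{growth}) is designed to handle. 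Under~(\ref{growth}), $\langle g^r,\nu_k\rangle\leqslant\kappa(\omega,\nu_k)\leqslant\|\omega\|\,\|\nu_k\|$ stays bounded, giving uniform integrability of $g$ against the $\nu_k$ (via Hölder with exponent $r$), whence $\langle g,\lambda\rangle=\lim\langle g,\nu_k\rangle=1$. Lower semicontinuity of $G_f$ in the strong topology (immediate from the representations, using that $\langle f,\cdot\rangle$ is vaguely l.s.c. in Case~I and strongly continuous in Case~II) then forces $G_f(\lambda)=G_f^\sigma(\Sigma,g)$, so $\lambda=:\lambda_\Sigma^\sigma$ is an equilibrium measure. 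Uniqueness follows from strict positive definiteness: two minimizers would have $\|\lambda-\lambda'\|^2\leqslant0$ by the parallelogram identity applied to the midpoint, hence $\lambda=\lambda'$.

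For part (b), the strategy is a monotone exhaustion/limiting argument. Since $\Sigma\subset\Sigma_s$ and $\sigma\leqslant\sigma_s$, we have $\mathcal E^\sigma(\Sigma,g)\subset\mathcal E^{\sigma_s}(\Sigma_s,g)$, so $G_f^{\sigma_s}(\Sigma_s,g)\leqslant G_f^\sigma(\Sigma,g)$, and the net $s\mapsto G_f^{\sigma_s}(\Sigma_s,g)$ is nondecreasing (as $s$ increases, i.e. $\Sigma_s$ shrinks) and bounded above by $G_f^\sigma(\Sigma,g)$; call its limit $G_0$. For the reverse inequality $G_0\geqslant G_f^\sigma(\Sigma,g)$, I would show the net $(\lambda_{\Sigma_s}^{\sigma_s})$ is strongly Cauchy along the directed set $S$: for $s\leqslant s'$ the midpoint $(\lambda_{\Sigma_s}^{\sigma_s}+\lambda_{\Sigma_{s'}}^{\sigma_{s'}})/2$ is an admissible competitor for the problem on $(\Sigma_s,\sigma_s)$ (one needs $\lambda_{\Sigma_{s'}}^{\sigma_{s'}}\leqslant\sigma_{s'}\leqslant\sigma_s$ and both integrate $g$ to $1$ — convexity again), so the parallelogram identity gives
\[
\|\lambda_{\Sigma_s}^{\sigma_s}-\lambda_{\Sigma_{s'}}^{\sigma_{s'}}\|^2 \leqslant 2G_f^{\sigma_s}(\Sigma_s,g)+2G_f^{\sigma_{s'}}(\Sigma_{s'},g)-4G_f^{\sigma_s}(\Sigma_s,g) = 2\bigl(G_f^{\sigma_{s'}}(\Sigma_{s'},g)-G_f^{\sigma_s}(\Sigma_s,g)\bigr),
\]
which tends to $0$ by convergence of the net of infima. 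By strong completeness there is a strong (hence vague) limit $\lambda_0\in\mathcal E^+$; vaguely $S_{\lambda_0}\subset\bigcap_s\Sigma_s=\Sigma$ and $\lambda_0\leqslant\lim\sigma_s=\sigma$ (using that $\sigma_s\downarrow\sigma$ vaguely), and the $g$-mass passes to the limit by the same uniform-integrability argument as in~(a) — this is where I again invoke boundedness of $g|_{U_\Sigma}$ or~(\ref{growth}), now noting $\langle g^r,\lambda_{\Sigma_s}^{\sigma_s}\rangle\leqslant\|\omega\|\,\|\lambda_{\Sigma_s}^{\sigma_s}\|$ with the norms uniformly bounded. Hence $\lambda_0\in\mathcal E^\sigma(\Sigma,g)$, so $G_f(\lambda_0)\geqslant G_f^\sigma(\Sigma,g)$; but $G_f(\lambda_0)=\lim G_f(\lambda_{\Sigma_s}^{\sigma_s})=G_0$ by strong continuity of $\nu\mapsto\|\nu\|^2$ and the appropriate semicontinuity of $\langle f,\cdot\rangle$, which forces $G_0=G_f^\sigma(\Sigma,g)$ and, by uniqueness in~(a), $\lambda_0=\lambda_\Sigma^\sigma$. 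This proves~(\ref{sigma}) and the claimed strong convergence simultaneously.

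For part (c), the idea is that the compact exhaustion $K\uparrow\Sigma$ is a special case of the framework of~(b) provided the constraint is inflated slightly to restore admissibility: $\mathcal E^{\sigma_K}(K,g)$ may well be empty (for instance $\langle g,\sigma_K\rangle<1$ for small $K$), which is why one replaces $\sigma_K$ by $\beta_K\sigma_K$ with $\beta_K\geqslant1$. Since $\langle g,\sigma_K\rangle\uparrow\langle g,\sigma\rangle$ as $K\uparrow\Sigma$, and admissibility of the problem on $(K,g,\beta_K\sigma_K)$ requires $\beta_K\langle g,\sigma_K\rangle\geqslant1$ together with $\beta_K\sigma_K\leqslant\sigma$-type compatibility when comparing different $K$'s, one defines $\beta_K^*$ to be (slightly less than) $\langle g,\sigma\rangle/\langle g,\sigma_K\rangle$ when the latter exceeds~$1$ — a quantity that decreases to~$1$ — and $\beta_K^*$ arbitrary $>1$ otherwise, taking $\{K\}$ large enough that $\langle g,\sigma_K\rangle>1/\beta_K^*$ along a cofinal subnet; one must check $\mathcal E_f^{\beta_K\sigma_K}(K,g)\ne\emptyset$ for large $K$, which follows because $\lambda_\Sigma^\sigma$ restricted to $K$ and rescaled gives such a measure. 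Then $(K,\beta_K\sigma_K)$ plays the role of $(\Sigma_s,\sigma_s)$: the sets increase to $\Sigma$... here I would instead run the argument "from below" — $\mathcal E^{\beta_K\sigma_K}(K,g)$ need not sit inside $\mathcal E^\sigma(\Sigma,g)$, but one shows $G_f^{\beta_K\sigma_K}(K,g)\to G_f^\sigma(\Sigma,g)$ by combining two inequalities: (i) $\limsup_K G_f^{\beta_K\sigma_K}(K,g)\leqslant G_f^\sigma(\Sigma,g)$, obtained by restricting $\lambda_\Sigma^\sigma$ to $K$, normalizing, and checking its $f$-weighted energy converges to $G_f(\lambda_\Sigma^\sigma)$ as $K\uparrow\Sigma$ (dominated convergence, using $G_f(\lambda_\Sigma^\sigma)<\infty$ and $\beta_K\to1$); and (ii) $\liminf_K G_f^{\beta_K\sigma_K}(K,g)\geqslant G_f^\sigma(\Sigma,g)$, obtained via strong compactness of the net $(\lambda_K^{\beta_K\sigma_K})$ — again a parallelogram/Cauchy estimate, now using that the midpoint of $\lambda_K^{\beta_K\sigma_K}$ and $\lambda_{K'}^{\beta_{K'}\sigma_{K'}}$ for $K\subset K'$ is admissible for the larger problem up to the scaling — producing a strong limit in $\mathcal E^\sigma(\Sigma,g)$. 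The main obstacle throughout is precisely the bookkeeping in~(c): choosing the net $(\beta_K^*)$ so that admissibility holds, the problems are genuinely nested (monotonicity of the infima), and the constraint masses and supports all converge correctly; this is the only place where real care, as opposed to routine adaptation of the $\mathcal E^\sigma(\Sigma,g)$-convexity machinery, is needed. The passage of the $g$-integral to the limit under~(\ref{growth}) is the recurring technical point in all three parts, but it is handled uniformly by the Hölder estimate $\langle g,\mu\rangle\leqslant\langle g^r,\mu\rangle^{1/r}\mu(\mathrm X)^{1/r'}\leqslant(\|\omega\|\,\|\mu\|)^{1/r}\mu(\mathrm X)^{1/r'}$ together with vague lower semicontinuity of $\mu\mapsto\langle g,\mu\rangle$ for the reverse bound.
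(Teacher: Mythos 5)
Your overall architecture coincides with the paper's: in (a) a minimizing net is shown to be strongly Cauchy via the convexity identity, and one concludes by strong completeness plus lower semicontinuity of $G_f$; in (b) the nesting $\mathcal E^{\sigma_d}(\Sigma_d,g)\subset\mathcal E^{\sigma_s}(\Sigma_s,g)$ makes the infima monotone and the net of equilibrium measures strongly Cauchy; in (c) one inflates the constraint to $\beta_K\sigma_K$ and reduces to (b). (Minor variations: the paper gets the Cauchy estimate in (b) from the variational inequality $\langle W^f_{\lambda_s},\lambda_d-\lambda_s\rangle\geqslant0$ rather than from the midpoint, and in (c) it first proves $G^\sigma_f(\Sigma,g)=\lim_K G^{\sigma_K}_f(K,g)$ by restricting an arbitrary competitor $\nu$ to $K$, renormalizing, and observing $\hat\nu_K\leqslant(1+\varepsilon)\sigma_K$ eventually; your choice of $\beta^*_K$ via $\langle g,\sigma\rangle/\langle g,\sigma_K\rangle$ is replaced there by an application of (b) with $K$ fixed and $\beta\downarrow1$. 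These are cosmetic differences.)

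There is, however, a genuine gap at the step you yourself identify as the crux: showing that the constraint $\langle g,\nu\rangle=1$ survives the strong limit. You argue that under~(\ref{growth}) the bound $\langle g^r,\nu_k\rangle\leqslant\|\omega\|\,\|\nu_k\|$ gives ``uniform integrability of $g$ against the $\nu_k$'' and hence $\langle g,\lambda\rangle=\lim\langle g,\nu_k\rangle$. Boundedness of the $r$-th moments only controls the contribution of the set where $g$ is large; it does nothing to prevent mass from escaping to infinity along a noncompact $\Sigma$, which is the actual danger. Take $g\equiv1$ (so all moments are trivially bounded and $\langle g,\nu_k\rangle\equiv1$) and, for the Newtonian kernel in $\mathbb R^3$, $\nu_k$ the normalized uniform measure on the ball of radius $k$: then $\|\nu_k\|\to0$, so the net is strongly Cauchy with strong and vague limit $0$, yet $\langle g,0\rangle=0\ne1$. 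So your argument, as written, would ``prove'' strong completeness of $\mathcal E^+(E,1)$ for an arbitrary closed $E$, which is false. What is missing is precisely the mechanism by which $C(\Sigma)<\infty$ enters (you name the hypothesis but never use it): the paper's Theorem~3.1 introduces the interior capacitary distributions $\theta_{CK}$ of the complements of compacts $K\subset E$, shows $\|\theta_{CK}\|\to0$ as $K\uparrow E$ (this is where $C(E)<\infty$ and perfectness are indispensable), and combines $g\chi_{CK}\leqslant\kappa_\omega^{1/r}\kappa_{\theta_{CK}}^{1/q}$ n.e.\ with H\"older and Cauchy--Schwarz to get the uniform tail bound $\langle g\chi_{CK},\nu_s\rangle\leqslant\|\omega\|^{1/r}\|\theta_{CK}\|^{1/q}\|\nu_s\|$; even the case of bounded $g$ is reduced to this via $\kappa_{\theta_E}\geqslant1$ n.e.\ in $E$. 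Without some such tightness argument the proofs of all three parts (each of which invokes the passage of the $g$-mass to the limit) are incomplete. A secondary, fixable imprecision: in (b) you assert $G_f(\lambda_0)=\lim G_f(\lambda_s)$ by ``appropriate semicontinuity'' of $\langle f,\cdot\rangle$, but in Case~I only lower semicontinuity is available; the equality must be closed by the sandwich $G_f(\lambda_0)\geqslant G^\sigma_f(\Sigma,g)\geqslant\lim_s G^{\sigma_s}_f(\Sigma_s,g)\geqslant G_f(\lambda_0)$.
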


Given a closed set $E\subset\mathrm X$ with $C(E)>0$ and a
universally measurable function~$\psi$ bounded from below nearly
everywhere in~$E$, we write
$$
"\!\inf_{x\in E}\!"\,\,\psi(x):=\sup\,\bigl\{q: \ \psi(x)\geqslant
q\quad\mbox{n.\,e.~in \ } E\bigr\}.
$$
Then \[\psi(x)\geqslant"\!\inf_{x\in
E}\!"\,\,\psi(x)\quad\mbox{n.\,e.~in \ }E,\] which follows from the
countable subadditivity of $C(\cdot)$ over universally measurable
sets with interior capacity zero~\cite{F1}. If $\psi$ is bounded
from above n.\,e.~in~$E$, write
\["\!\sup_{x\in E}\!"\,\,\psi(x):="\!\inf_{x\in E}\!"\,\,-\psi(x).\]

In the next theorem we assume that $\sigma_K\in\mathcal E$ for every
compact $K\subset\Sigma$ and $\langle g,\sigma_{\Sigma_0}\rangle>1$,
where $\Sigma_0:=\bigl\{x\in\Sigma:\ f(x)<\infty\bigr\}$. Then
(\ref{finite}) necessarily holds, since one can choose a compact
$K_0\subset\Sigma_0$ so that $\sigma_{K_0}\bigl/\langle
g,\sigma_{K_0}\rangle\in\mathcal E_f^\sigma(\Sigma,g)$;
cf.~\cite{DS,Z5}.

\begin{theorem}\label{varin}
Given $\lambda\in\mathcal E^\sigma(\Sigma,g)$, the following
assertions are equivalent:
\begin{itemize}
\item[\rm(i)] $\lambda$ is an
equilibrium measure $\lambda^{\sigma}_\Sigma$.\smallskip
\item[\rm(ii)] There exists $w_\lambda\in\mathbb R$ such
that\,\footnote{Observe that, under the assumptions made, both
$C\bigl(S_{\sigma-\lambda}\bigr)$ and $C\bigl(S_\lambda\bigr)$ are
nonzero.}
\begin{align}\label{ineq1}W^f_\lambda(x)&\geqslant w_\lambda\,
g(x)\quad\mbox{n.\,e.~in \ }S_{\sigma-\lambda},\\
\label{ineq2}W^f_\lambda(x)&\leqslant w_\lambda\,
g(x)\quad\mbox{n.\,e.~in \ }S_{\lambda}.\end{align}
\item[\rm(iii)] $-\infty<\ell\leqslant L<\infty$, where
\begin{equation}\label{lL}\ell:="\!\!\sup_{x\in
S_{\lambda}}\!\!"\,\,\,\frac{W^f_\lambda(x)}{g(x)}\,,\qquad
L:=\,\,"\!\!\!\!\!\inf_{x\in
S_{\sigma-\lambda}}\!\!\!\!\!"\,\,\,\frac{W^f_\lambda(x)}{g(x)}\,.\end{equation}
\end{itemize}
\end{theorem}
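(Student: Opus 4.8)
The plan is to establish the cycle of implications $(i)\Rightarrow(ii)\Rightarrow(iii)\Rightarrow(i)$, the middle step being purely formal once the first is done.

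First I would prove $(i)\Rightarrow(ii)$ by the standard convex-perturbation argument, which here must be run in two directions because the admissible class $\mathcal E^\sigma(\Sigma,g)$ is constrained from both sides: $\nu\geqslant0$ (equivalently $\nu\geqslant0$) and $\nu\leqslant\sigma$, together with the normalization $\langle g,\nu\rangle=1$. Fix the equilibrium measure $\lambda=\lambda^\sigma_\Sigma$. For any $\mu\in\mathcal E^\sigma_f(\Sigma,g)$ the measure $\lambda_t:=\lambda+t(\mu-\lambda)$ lies in $\mathcal E^\sigma(\Sigma,g)$ for $t\in[0,1]$ (convexity of the class, using that both $\nu\geqslant0$ and $\nu\leqslant\sigma$ are preserved under convex combinations, and $\langle g,\lambda_t\rangle=1$), and $G_f(\lambda_t)=G_f(\lambda)+2t\langle W^f_\lambda-? ,\mu-\lambda\rangle+t^2\|\mu-\lambda\|^2$; minimality at $t=0$ gives $\langle W^f_\lambda,\mu-\lambda\rangle\geqslant0$, i.e. $\langle W^f_\lambda,\mu\rangle\geqslant\langle W^f_\lambda,\lambda\rangle$ for all admissible $\mu$. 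Writing $w_\lambda:=\langle W^f_\lambda,\lambda\rangle$ (this is the right constant, and one checks $w_\lambda\in\mathbb R$ since $G_f(\lambda)<\infty$ forces $\langle W^f_\lambda,\lambda\rangle=G_f(\lambda)-\langle f,\lambda\rangle$ finite, using $\langle g,\lambda\rangle=1$ to make the scaling sensible), one extracts the two pointwise inequalities: on $S_{\sigma-\lambda}$ one is free to add a small multiple of mass to $\lambda$ there (the constraint $\nu\leqslant\sigma$ leaves room), which after renormalizing by $g$ yields $W^f_\lambda\geqslant w_\lambda g$ n.e.\ on $S_{\sigma-\lambda}$; on $S_\lambda$ one may remove a small multiple of mass, giving $W^f_\lambda\leqslant w_\lambda g$ n.e.\ on $S_\lambda$. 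The passage from the integrated inequality to the n.e.\ pointwise statement is the delicate point: it uses that $C(N)=0$ sets are $\lambda$-negligible and $(\sigma-\lambda)$-negligible (guaranteed by the standing hypothesis that $\mathrm X$ is $\sigma$-compact or $\inf g>0$, together with $\lambda,\sigma-\lambda$ having finite $g$-mass), and the countable subadditivity of $C(\cdot)$ recalled right before the theorem; one argues by contradiction, supposing the relevant level set has positive capacity, and perturbs along a measure of finite energy supported there.

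Next, $(ii)\Rightarrow(iii)$ is immediate from the definitions of $\ell$ and $L$ via the $"\!\inf\!"$/$"\!\sup\!"$ notation: (\ref{ineq2}) says $W^f_\lambda/g\leqslant w_\lambda$ n.e.\ on $S_\lambda$, hence $\ell\leqslant w_\lambda$; (\ref{ineq1}) says $W^f_\lambda/g\geqslant w_\lambda$ n.e.\ on $S_{\sigma-\lambda}$, hence $L\geqslant w_\lambda$; combining, $\ell\leqslant w_\lambda\leqslant L$, and both are finite because $w_\lambda\in\mathbb R$ and because $W^f_\lambda$ is bounded below n.e.\ on $S_\lambda$ (Case~I: $f$ bounded below there or $f\geqslant0$; Case~II: use (\ref{repres})) and bounded above n.e.\ on $S_{\sigma-\lambda}$ after dividing by $g>0$ which is continuous on $U_\Sigma$.

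Finally, for $(iii)\Rightarrow(i)$ I would take an arbitrary competitor $\mu\in\mathcal E^\sigma_f(\Sigma,g)$ and show $G_f(\mu)\geqslant G_f(\lambda)$. The identity $G_f(\mu)-G_f(\lambda)=\|\mu-\lambda\|^2+2\langle W^f_\lambda,\mu-\lambda\rangle$ reduces everything to proving $\langle W^f_\lambda,\mu-\lambda\rangle\geqslant0$. Split $\mu-\lambda=(\mu-\lambda)^+-(\mu-\lambda)^-$; the positive part is supported where $\mu>\lambda$, which (since $\mu\leqslant\sigma$) forces $\lambda<\sigma$ there, so that support sits in $S_{\sigma-\lambda}$ up to a $C(\cdot)=0$ set, whence $W^f_\lambda\geqslant \ell\, g$ — careful, rather $\geqslant L\,g$ — on it; the negative part is supported in $S_\lambda$ where $W^f_\lambda\leqslant \ell\, g$. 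Therefore $\langle W^f_\lambda,\mu-\lambda\rangle\geqslant L\langle g,(\mu-\lambda)^+\rangle-\ell\langle g,(\mu-\lambda)^-\rangle\geqslant \ell\bigl(\langle g,(\mu-\lambda)^+\rangle-\langle g,(\mu-\lambda)^-\rangle\bigr)=\ell\langle g,\mu-\lambda\rangle=\ell(1-1)=0$, using $\ell\leqslant L$ in the middle and $\langle g,\mu\rangle=\langle g,\lambda\rangle=1$ at the end. Hence $G_f(\mu)\geqslant G_f(\lambda)$, so $\lambda$ is the equilibrium measure (unique by strict positive definiteness of the perfect kernel). The main obstacle throughout is the rigorous handling of the "n.e." qualifiers — ensuring each manipulation (restricting a measure to a level set, neglecting a capacity-zero set, passing between integrated and pointwise inequalities) is licensed by the negligibility facts recorded in the introduction; the Hilbert-space identity and the convexity of the admissible class are routine.
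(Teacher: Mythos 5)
The implication (i)$\Rightarrow$(ii) in your proposal rests on the claim that $w_\lambda:=\langle W^f_\lambda,\lambda\rangle$ is ``the right constant'' for \emph{both} inequalities, and this is where the argument breaks down. That constant does work for the lower bound~(\ref{ineq1}) --- this is precisely relation~(\ref{wfl}) in the paper's proof --- but it is in general too small for the upper bound~(\ref{ineq2}): integrating (\ref{ineq2}) against $\lambda$ shows that every admissible $w_\lambda$ satisfies $w_\lambda\geqslant\langle W^f_\lambda,\lambda\rangle$, with equality of the least admissible constant $\ell$ and $\langle W^f_\lambda,\lambda\rangle$ only if $W^f_\lambda=\ell g$ holds $\lambda$-a.e. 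In the constrained problem this typically fails on the part of $S_\lambda$ where the constraint is active, since there the variational conditions impose no lower bound on $W^f_\lambda/g$. A concrete instance within the theorem's hypotheses: take the Newtonian kernel in $\mathbb R^3$, $g=1$, $f=0$, $\Sigma=S(0,1)\cup S(0,1/3)$, and $\sigma=\tfrac12\mu_1+\mu_{1/3}$, where $\mu_r$ is the uniform unit measure on $S(0,r)$; a symmetrization argument gives $\lambda^\sigma_\Sigma=\tfrac12\mu_1+\tfrac12\mu_{1/3}$, whose potential equals $1$ on $S(0,1)$ and $2$ on $S(0,1/3)$, so that $\ell=L=2$ while $\langle W^f_\lambda,\lambda\rangle=3/2$; inequality~(\ref{ineq2}) with your constant then fails on all of $S(0,1/3)$, a set of positive capacity. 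The correct single constant is $L$ from~(\ref{lL}), and the paper proves (\ref{ineq2}) for it by a genuinely different mechanism than the one you sketch.

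The reason your perturbation cannot be repaired with that constant is structural. ``Removing a small multiple of mass'' from a subset of $S_\lambda$ destroys the normalization $\langle g,\nu\rangle=1$, so the mass must be redeposited where $\sigma-\lambda$ has room, i.e.\ on $S_{\sigma-\lambda}$; the exchange lowers $G_f$ only if the destination has smaller $W^f_\lambda/g$ than the source, and since $W^f_\lambda/g\geqslant L$ n.e.\ on $S_{\sigma-\lambda}$, such a destination of positive $(\sigma-\lambda)$-measure exists only when the source lies in $E^+(w_1)$ for some $w_1>L$. This coupling between the two supports is exactly why the threshold in~(\ref{ineq2}) is $L$ rather than $\langle W^f_\lambda,\lambda\rangle$; the paper implements it with the competitor $\gamma=\lambda-\lambda_{E^+(w_1)}+\alpha\xi$, $\xi=(\sigma-\lambda)_F$ with $F\subset E^-(w_1)$ compact, fed into Lemma~\ref{lequiv}. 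Note also that before any mass can be removed one must upgrade the failure of~(\ref{ineq2}), which a priori only says $C(E^+(L)\cap S_\lambda)>0$, to the statement $\lambda(E^+(w_1))>0$; positive capacity alone does not permit the perturbation. Your step (ii)$\Rightarrow$(iii) and your Hahn-decomposition computation $\langle W^f_\lambda,\mu-\lambda\rangle\geqslant L\langle g,(\mu-\lambda)^+\rangle-\ell\langle g,(\mu-\lambda)^-\rangle\geqslant0$ for (iii)$\Rightarrow$(i) are sound and equivalent to the paper's argument via Lemma~\ref{lequiv} (and the stray ``$?$'' in your expansion of $G_f(\lambda_t)$ is presumably just the identity~(\ref{mainin})), but as written the proof of (i)$\Rightarrow$(ii) does not go through.
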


\begin{remark} It follows that, if $\lambda$ is an equilibrium measure, then the collection of all $w_\lambda$
for whom both (\ref{ineq1}) and~(\ref{ineq2}) hold forms the finite
closed interval~$[\ell,L]$. Of course, if $g=1$, $f=0$ and $\kappa$
satisfies the maximum principle, then $\ell=L$ and
$[\ell,L]$~consists of just one point. However, this is not the case
in general (see~Sec.~\ref{examples}).
\end{remark}

\begin{remark}Relation (\ref{ineq2}) actually holds for every $x\in
S_\lambda$ if, moreover, $f\in\mathrm\Phi(\mathrm X)$.\end{remark}

The rest of the article is organized as follows. Theorem~\ref{varin}
will be proved in~Sec.~\ref{proof:varin}. The proof of
Theorem~\ref{exist}, to be given in Sec.~\ref{prooff}, is based on a
theorem on the strong completeness of $\mathcal E^\sigma(\Sigma,g)$,
which is the main subject of the next section.

\section{Auxiliary assertions}\label{auxiliary}

\begin{theorem}\label{lemma:exist'} Let $\kappa$ be perfect,
$E\subset U_\Sigma$ be a closed set with $C(E)<\infty$, and let
$g$~be as in~Theorem~{\rm\ref{exist}}. Then
\[\mathcal E^+(E,g):=\bigl\{\nu\in\mathcal E^+(E):\ \langle
g,\nu\rangle=1\bigr\},\] treated as a topological subspace
of~$\mathcal E$, is strongly complete. In more detail, every
strongly fundamental net $(\nu_s)_{s\in S}\subset\mathcal E^+(E,g)$
converges strongly (and, hence, vaguely) to a unique
$\nu_0\in\mathcal E^+(E,g)$. If, moreover, $\sigma_0\in\mathfrak
M^+(E)$ is given, then the same holds true for $\mathcal
E^{\sigma_0}(E,g)$ instead of $\mathcal E^+(E,g)$.
\end{theorem}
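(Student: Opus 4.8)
The plan is to prove strong completeness of $\mathcal E^+(E,g)$ by exploiting the perfectness of $\kappa$ together with the capacity bound $C(E)<\infty$. Let me sketch the argument.

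First I would establish the key estimate: the total mass of any $\nu\in\mathcal E^+(E,g)$ is uniformly bounded. This is where the hypotheses on $g$ enter. If $g|_{U_\Sigma}$ is bounded, say $g\leqslant M$, this is immediate from $\langle g,\nu\rangle=1$ only when $g$ is also bounded below — but $g>0$ continuous on a set of finite capacity need not be bounded below. The honest route: the condition $C(E)<\infty$ means there is an equilibrium-type measure or at least that $E$ supports measures of finite energy with controlled mass; more precisely, under perfectness, finite capacity of $E$ yields that $\mathfrak M^+(E)\cap\{\langle g,\nu\rangle=1\}$ has energies bounded below and, crucially, that a strongly fundamental net is vaguely relatively compact. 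In the bounded-$g$ case one shows $\nu(E)$ stays bounded because otherwise $\langle g,\nu\rangle$ would force mass to escape to a region where $g$ is small, but $C(E)<\infty$ prevents this via a truncation argument. In the growth case~(\ref{growth}), one writes $\langle g,\nu\rangle^r\leqslant\langle g^r,\nu\rangle\langle 1,\nu\rangle^{r-1}$ by Hölder (valid since $r>1$), hence $\nu(E)^{r-1}\geqslant\langle g^r,\nu\rangle^{-1}\geqslant\langle\kappa_\omega,\nu\rangle^{-1}=\kappa(\omega,\nu)^{-1}\geqslant(\|\omega\|\,\|\nu\|)^{-1}$ — wait, that gives a lower bound on mass; I actually want an upper bound, so I should instead bound $\|\nu\|$ from below in terms of mass via Hölder the other way, combined with finite capacity giving $\|\nu\|^2\geqslant\nu(E)^2/C(E)$, to conclude $\nu(E)$ bounded once energies along the net are bounded (which they are, being fundamental). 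This mass bound plus vague compactness of bounded sets in $\mathfrak M^+(E)$ gives a vague limit point $\nu_0$.

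Next I would invoke perfectness in the form proved in~\cite{F1}: $\mathcal E^+=\mathcal E^+(\mathrm X)$ is strongly complete. Since $(\nu_s)$ is strongly fundamental, it converges strongly to some $\nu_0\in\mathcal E^+$, and by perfectness also vaguely, so $\nu_0\in\mathfrak M^+(E)$ because $E$ is closed. It remains to check $\nu_0\in\mathcal E^+(E,g)$, i.e.\ $\langle g,\nu_0\rangle=1$. Vague convergence alone does not preserve $\langle g,\cdot\rangle$ when $g$ is unbounded or when mass escapes; this is the delicate point. Lower semicontinuity of $\nu\mapsto\langle g,\nu\rangle$ under vague convergence (since $g$ is continuous and positive, a Fatou-type argument on an exhaustion of $U_\Sigma$ by compacts) gives $\langle g,\nu_0\rangle\leqslant 1$. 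For the reverse, the uniform mass bound together with the growth/boundedness hypothesis rules out loss of mass: in the bounded-$g$ case a standard tightness argument using $C(E)<\infty$ (the Wiener-type criterion: the part of $E$ outside a large compact has small capacity, hence carries little $g$-mass uniformly) yields $\langle g,\nu_0\rangle\geqslant 1$; in the growth case the inequality $g^r\leqslant\kappa_\omega$ with $\omega\in\mathcal E$ provides exactly the needed uniform integrability, since $\langle\kappa_\omega,\nu_s\rangle=\kappa(\omega,\nu_s)\to\kappa(\omega,\nu_0)$ by strong convergence, controlling the tails of $\langle g,\nu_s\rangle$. Uniqueness of $\nu_0$ is immediate from strict positive definiteness (perfectness implies $\|\cdot\|$ is a norm).

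Finally, for the constrained class $\mathcal E^{\sigma_0}(E,g)$, I would add one observation: the inequality constraint $\nu\leqslant\sigma_0$ is preserved under vague limits (if $\nu_s\leqslant\sigma_0$ and $\nu_s\to\nu_0$ vaguely then $\nu_0\leqslant\sigma_0$, since $\langle\varphi,\sigma_0-\nu_0\rangle=\lim\langle\varphi,\sigma_0-\nu_s\rangle\geqslant 0$ for every nonnegative $\varphi\in C_c$). Hence the strong limit $\nu_0$ furnished above automatically lies in $\mathcal E^{\sigma_0}(E,g)$, and the same proof goes through verbatim. The main obstacle, as indicated, is the second paragraph's point: showing no mass is lost in the vague limit so that the normalization $\langle g,\nu_0\rangle=1$ survives — this is precisely what the finiteness of $C(E)$ and the two alternative hypotheses on $g$ are designed to guarantee, and handling both cases uniformly requires care.
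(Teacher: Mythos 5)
Your skeleton matches the paper's: use perfectness to get a strong (hence vague) limit $\nu_0\in\mathcal E^+(E)$, observe $\langle g,\nu_0\rangle\leqslant 1$ by vague lower semicontinuity, reduce everything to the reverse inequality (no loss of $g$-mass in the tails of $E$), and settle the constrained case by vague closedness of $\{\nu\leqslant\sigma_0\}$. But the reverse inequality is exactly where your proof stops being a proof. You assert it via a ``standard tightness argument'' in the bounded case and ``uniform integrability'' furnished by $g^r\leqslant\kappa_\omega$ in the growth case; neither assertion identifies the actual mechanism. There are two concrete gaps. First, the claim that ``the part of $E$ outside a large compact has small capacity'' is not a triviality: interior capacity is not in general continuous along decreasing intersections, and the paper has to prove $C(E\setminus K)\to 0$ by showing that the interior capacitary distributions $\theta_{E\setminus K}$ form a strongly fundamental net (via the Fuglede inequality $\|\theta_{CK}-\theta_{C\tilde K}\|^2\leqslant\|\theta_{CK}\|^2-\|\theta_{C\tilde K}\|^2$ together with the bounded monotone net $\|\theta_{CK}\|^2=C(CK)\leqslant C(E)<\infty$) converging vaguely to $0$, hence strongly to $0$ by perfectness. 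Second, even granting $C(E\setminus K)\to0$, one needs a bridge from the capacity of the tail to its $g$-mass, uniformly in $s$; the paper gets it from the pointwise bound $g\chi_{E\setminus K}\leqslant\kappa_\omega^{1/r}\,\kappa_{\theta_{E\setminus K}}^{1/q}$ n.e.\ in $E$ (using $\kappa_{\theta_{E\setminus K}}\geqslant1$ n.e.\ there), followed by H\"older and Cauchy--Schwarz to obtain $\langle g\chi_{E\setminus K},\nu_s\rangle\leqslant\|\omega\|^{1/r}\|\theta_{E\setminus K}\|^{1/q}\|\nu_s\|$. Your growth-case substitute --- that $\kappa(\omega,\nu_s)\to\kappa(\omega,\nu_0)$ by strong convergence ``controls the tails'' --- is a non sequitur: that convergence carries no localization and says nothing about $\langle g\chi_{E\setminus K},\nu_s\rangle$.

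Two smaller points. Your opening attempt at a uniform mass bound is muddled (you notice yourself that H\"older goes the wrong way); the bound does follow from $\nu(E)^2\leqslant C(E)\|\nu\|^2$, but it is not needed, since perfectness already hands you the vague limit of the strongly fundamental net. Also, the paper unifies the two hypotheses on $g$ at the outset: if $g|_E\leqslant M$, then condition (\ref{growth}) holds on $E$ with $\omega:=M^r\theta_E$ because $\kappa_{\theta_E}\geqslant1$ n.e.\ in $E$; so only the growth case needs to be argued, and your separate (unsubstantiated) tightness claim for the bounded case could have been avoided entirely.
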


\begin{proof} For every $B\subset E$ there exists a uniquely determined measure
$\theta_B\in\mathcal E^+(\,\overline{B}\,)$, called the interior
capacitary distribution associated with~$B$, with the properties
\begin{equation}
\theta_B(\mathrm X)=\|\theta_B\|^2=C(B), \label{5}
\end{equation}
\begin{equation}
\kappa_{\theta_B}(x)\geqslant1\quad\mbox{n.\,e. in \ } B. \label{6}
\end{equation}
Indeed, this follows from $C(E)<\infty$ and the perfectness
of~$\kappa$  due to~\cite[Th.~4.1]{F1}.

One can certainly assume that $C(E)>0$, since otherwise $\mathcal
E^+(E,g)$ is empty. Also observe that there is no loss of generality
in assuming~$g$ to satisfy~(\ref{growth}) for~$E$ instead
of~$U_\Sigma$, since otherwise $g|_E$ is bounded from above (say
by~$M$), which combined with~(\ref{6}) again gives~(\ref{growth})
for $\omega:=M^{r}\,\theta_{E}$, $r\in(1,\infty)$ being arbitrary.

Fix a strongly fundamental net $(\nu_s)_{s\in S}\subset\mathcal
E^+(E,g)$; then one can assume it to be strongly bounded. Due to the
perfectness of the kernel, such a net converges to some
$\nu_0\in\mathcal E^+$ strongly and, therefore, vaguely. The latter
yields $S_{\nu_0}\subset E$ and $\langle g,\nu_0\rangle\leqslant1$.
To prove that $\mathcal E^+(E,g)$ is strongly complete, it is enough
to show that
\begin{equation}\label{24}\langle
g,\nu_0\rangle=1.\end{equation}

To this end, we shall treat $E$ as a locally compact space with the
topology induced from~$\mathrm X$. Given a set $B\subset E$, let
$\chi_B$ denote its characteristic function and let $CB:=E\setminus
B$. Further, let $\{K\}$ be the increasing family of all compact
subsets~$K$ of~$E$. Since $g\chi_{K}$ is upper semicontinuous on~$E$
while $(\nu_s)_{s\in S}$ converges to~$\nu_0$ vaguely, for every
$K\in\{K\}$ we have
\[
\langle g\chi_{K},\nu_0\rangle\geqslant\limsup_{s\in S}\,\langle
g\chi_{K},\nu_s\rangle.\] On the other hand, Lemma~1.2.2
from~\cite{F1} gives
\[
\langle g,\nu_0\rangle=\lim_{K\in\{K\}}\,\langle
g\chi_{K},\nu_0\rangle.\] Combining the last two relations, we
obtain
\[
1\geqslant\langle g,\nu_0\rangle\geqslant\limsup_{(s,\,K)\in
S\times\{K\}}\,\langle g\chi_{K},\nu_s\rangle= 1-\liminf_{(s,\,K)\in
S\times\{K\}}\,\langle g\chi_{CK},\nu_s\rangle,\] $S\times\{K\}$
being the directed product of the directed sets~$S$ and~$\{K\}$.
Hence, if we prove
\begin{equation}
\liminf_{(s,\,K)\in S\times\{K\}}\,\langle
g\chi_{CK},\nu_s\rangle=0, \label{25}
\end{equation}
the desired relation (\ref{24}) follows.

To obtain (\ref{25}), consider the interior capacitary distribution
$\theta_{CK}$, $K\in\{K\}$ being given. Then application
of~Lemma~4.1.1 and Theorem~4.1 from~\cite{F1} yields
\[
\|\theta_{CK}-\theta_{C\tilde{K}}\|^2\leqslant
\|\theta_{CK}\|^2-\|\theta_{C\tilde{K}}\|^2\quad\mbox{provided \
}K\subset\tilde{K}.\] Furthermore, it is clear from~(\ref{5}) that
the net $\|\theta_{CK}\|$, $K\in\{K\}$, is bounded and
nonincreasing, and hence fundamental in~$\mathbb R$. The preceding
inequality thus implies that $(\theta_{CK})_{K\in\{K\}}$ is strongly
fundamental in~$\mathcal E^+$. Since it converges vaguely to zero,
zero is also its strong limit due to the perfectness of the kernel;
hence,
\begin{equation*}
\lim_{K\in\{K\}}\,\|\theta_{CK}\|=0. \label{27}
\end{equation*}

Write $q:=r(r-1)^{-1}$, where $r\in(1,\infty)$ is the number
involved in condition~(\ref{growth}). Combining (\ref{growth}) with
(\ref{6}) shows that the inequality
\[
g(x)\,\chi_{CK}(x)\leqslant\kappa_\omega(x)^{1/r}\,
\kappa_{\theta_{CK}}(x)^{1/q}\]  subsists n.\,e.~in~$E$, and hence
$\nu_s$-almost everywhere in~$\mathrm X$. Having integrated this
relation with respect to~$\nu_s$, we then apply the H\"older and,
subsequently, the Cauchy-Schwarz inequalities to the integrals on
the right. This gives \[\langle
g\chi_{CK},\nu_s\rangle\leqslant\langle\kappa_\omega,\nu_s\rangle^{1/r}\,
\langle\kappa_{\theta_{CK}},\nu_s\rangle^{1/q}\leqslant
\|\omega\|^{1/r}\,\|\theta_{CK}\|^{1/q}\,\|\nu_s\|.\] Taking limits
here along $S\times\{K\}$, we obtain~(\ref{25}) and,
hence,~(\ref{24}).

It has thus been proved that $\mathcal E^+(E,g)$ is strongly
complete. To establish the strong completeness of $\mathcal
E^{\sigma_0}(E,g)$, it is therefore enough to note that the set of
all $\nu\in\mathcal E^+(E)$ which do not exceed~$\sigma_0$ is
vaguely (hence, also strongly) closed.\end{proof}

\begin{lemma}\label{stronglower}Assume $\kappa$ to be perfect. In both Cases I and II,
the $f$-weighted energy~$G_f$ is lower semicontinuous on~$\mathcal
E^+$ in the strong topology.\end{lemma}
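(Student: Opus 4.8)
The plan is to treat the two cases separately, exploiting the fact that in both the energy $\|\nu\|^2$ is strongly lower semicontinuous (indeed strongly continuous up to the usual care with infinite values), so the whole issue is to control the linear term $\langle f,\nu\rangle$. In Case~II we have $f=\kappa_\zeta$ with $\zeta\in\mathcal E$, and by the identity $G_f(\nu)=\|\nu+\zeta\|^2-\|\zeta\|^2$ from~(\ref{repres}) the functional $G_f$ is, up to the additive constant $-\|\zeta\|^2$, the square of the strong (semi)norm of $\nu+\zeta$. Since $\nu\mapsto\nu+\zeta$ is a strong isometry (translation) of $\mathcal E$ and $\|\cdot\|^2$ is strongly continuous on $\mathcal E$, the desired strong lower semicontinuity (in fact continuity) of $G_f$ on $\mathcal E^+$ is immediate. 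First I would dispose of this case in one or two lines.

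The substance is Case~I, where $f\in\mathrm\Phi(\mathrm X)$, i.e.\ $f$ is lower semicontinuous and bounded below (by~$0$ unless $\mathrm X$ is compact). Here I would fix a net $(\nu_s)_{s\in S}\subset\mathcal E^+$ converging strongly to some $\nu_0\in\mathcal E^+$ and show $G_f(\nu_0)\leqslant\liminf_s G_f(\nu_s)$. Since $\|\nu_s\|\to\|\nu_0\|$, it suffices to prove $\langle f,\nu_0\rangle\leqslant\liminf_s\langle f,\nu_s\rangle$. The key step is to pass from strong convergence to vague convergence: by the perfectness of $\kappa$, strong convergence on $\mathcal E^+$ implies vague convergence, so $\nu_s\to\nu_0$ vaguely. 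Now I invoke the standard principle of descent for lower semicontinuous kernels/functions: if $\psi$ is lower semicontinuous and bounded below on~$\mathrm X$ and $\nu_s\to\nu_0$ vaguely, then $\langle\psi,\nu_0\rangle\leqslant\liminf_s\langle\psi,\nu_s\rangle$ (this is Lemma~1.2.2 / the vague lower semicontinuity statements in~\cite{F1}, applied after writing $\psi=\sup_K(\psi\wedge n)\chi_K$ as an increasing limit of bounded continuous-type functions with compact support, or directly). Applying this with $\psi=f$ gives exactly the inequality needed, and adding $\|\nu_s\|^2\to\|\nu_0\|^2$ completes the argument.

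The one point requiring a little care — and the place I expect to be the main obstacle — is the boundedness-below hypothesis needed for the principle of descent in Case~I when $\mathrm X$ is \emph{not} compact: there $f\geqslant0$ by the definition of $\mathrm\Phi(\mathrm X)$, so the descent principle applies verbatim. When $\mathrm X$ \emph{is} compact, $f$ is merely bounded below by some $-c$, but then $f+c\geqslant0$ is again in $\mathrm\Phi(\mathrm X)$, vague convergence of $\nu_s$ gives $\nu_s(\mathrm X)\to\nu_0(\mathrm X)$ (the constant~$1$ is continuous with compact support on a compact space), and hence $\liminf_s\langle f,\nu_s\rangle=\liminf_s\langle f+c,\nu_s\rangle-c\,\nu_0(\mathrm X)\geqslant\langle f+c,\nu_0\rangle-c\,\nu_0(\mathrm X)=\langle f,\nu_0\rangle$. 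So in all cases the linear term is vaguely lower semicontinuous along our net, and the proof closes. I would present Case~II first (two lines), then Case~I with the split into the noncompact and compact subcases as above.
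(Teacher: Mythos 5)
Your proposal is correct and follows essentially the same route as the paper: Case~II via the identity $G_f(\nu)=\|\nu+\zeta\|^2-\|\zeta\|^2$, and Case~I via vague lower semicontinuity of $\langle f,\nu\rangle$ combined with the fact that on $\mathcal E^+$ the strong topology is finer than the vague one. The only cosmetic difference is that you handle the quadratic term by strong continuity of $\|\cdot\|^2$ while the paper invokes its vague lower semicontinuity; both close the argument.
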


\begin{proof}Actually, in Case~II \,$G_f$ is continuous on~$\mathcal E^+$ in the strong topology,
which is seen from~(\ref{repres}). Let Case~I take place; then
$f\in\mathrm\Phi(\mathrm X)$ and, hence, $\langle f,\nu\rangle$ is
vaguely lower semicontinuous on~$\mathcal E^+$ (see~\cite{F1}).
Since so is $\kappa(\nu,\nu)$, the desired conclusion follows in
view of the fact that the strong topology is finer than the vague
one.\end{proof}

\begin{lemma}\label{lequiv} Assume that {\rm(\ref{finite})} holds. For
$\lambda\in\mathcal E_f^\sigma(\Sigma,g)$ to be an equilibrium
measure, it is necessary and sufficient that
\begin{equation}\label{lchar}\langle
W_f^\lambda,\nu-\lambda\rangle\geqslant0\quad\mbox{for all \ }
\nu\in\mathcal E_f^\sigma(\Sigma,g).\end{equation}\end{lemma}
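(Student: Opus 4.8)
The plan is to use the fact that $G_f$ is a quadratic functional on the pre-Hilbert space $\mathcal E$ and that $\mathcal E_f^\sigma(\Sigma,g)$ is a convex subset of it, so that the assertion is nothing but the characterization of the minimizer of a convex functional by a first-order (variational) inequality. As a preliminary step I would record the convexity of $\mathcal E_f^\sigma(\Sigma,g)$: each of the defining conditions $\nu\geqslant0$, $S_\nu\subset\Sigma$, $\langle g,\nu\rangle=1$, $\nu\leqslant\sigma$, $\nu\in\mathcal E$, $G_f(\nu)<\infty$ is stable under forming convex combinations (for the last two this uses that $\mathcal E$ is a linear space and the finiteness of $G_f$ on convex combinations, which follows from the expansion below). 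Hence, for $\lambda,\nu\in\mathcal E_f^\sigma(\Sigma,g)$ and $t\in[0,1]$ the measure $\lambda_t:=\lambda+t(\nu-\lambda)$ again lies in $\mathcal E_f^\sigma(\Sigma,g)$.

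The heart of the argument is the elementary expansion of the quadratic form: for $\lambda,\nu\in\mathcal E_f^\sigma(\Sigma,g)$ and $t\in[0,1]$,
\begin{equation*}
G_f(\lambda_t)=G_f(\lambda)+2t\bigl[\kappa(\lambda,\nu-\lambda)+\langle f,\nu-\lambda\rangle\bigr]+t^2\|\nu-\lambda\|^2=G_f(\lambda)+2t\,\langle W_f^\lambda,\nu-\lambda\rangle+t^2\|\nu-\lambda\|^2.
\end{equation*}
Here I would pause to verify that every term is a well-defined finite real number, which is the one genuinely delicate point: $\|\nu-\lambda\|^2$ and $\kappa(\lambda,\nu-\lambda)=\kappa(\lambda,\nu)-\|\lambda\|^2$ are finite by the Cauchy--Schwarz inequality on~$\mathcal E$; in Case~I the finiteness of $G_f(\nu)$ and $G_f(\lambda)$ together with $f$ being bounded from below on $S_\nu\cup S_\lambda$ forces $\langle f,\nu\rangle,\langle f,\lambda\rangle\in\mathbb R$, whence $\langle f,\nu-\lambda\rangle\in\mathbb R$, while in Case~II $\langle f,\nu-\lambda\rangle=\kappa(\zeta,\nu-\lambda)$ is finite by Cauchy--Schwarz; and $\langle W_f^\lambda,\nu-\lambda\rangle=\kappa(\lambda,\nu-\lambda)+\langle f,\nu-\lambda\rangle$ by the very definition $W_f^\lambda=\kappa_\lambda+f$. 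This also shows $G_f(\lambda_t)<\infty$, justifying the convexity claim used above.

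Both implications are then read off from this identity. For sufficiency, assuming (\ref{lchar}), put $t=1$ to get $G_f(\nu)=G_f(\lambda)+2\langle W_f^\lambda,\nu-\lambda\rangle+\|\nu-\lambda\|^2\geqslant G_f(\lambda)$ for every $\nu\in\mathcal E_f^\sigma(\Sigma,g)$; since $G_f\equiv+\infty$ on $\mathcal E^\sigma(\Sigma,g)\setminus\mathcal E_f^\sigma(\Sigma,g)$, this yields $G_f(\lambda)=G_f^\sigma(\Sigma,g)$, i.e.\ $\lambda$ is an equilibrium measure. Conversely, if $\lambda$ is an equilibrium measure, then $G_f(\lambda_t)\geqslant G_f(\lambda)$ for all $t\in(0,1]$ and all $\nu\in\mathcal E_f^\sigma(\Sigma,g)$; dividing the identity by $2t>0$ gives $\langle W_f^\lambda,\nu-\lambda\rangle+\tfrac{t}{2}\|\nu-\lambda\|^2\geqslant0$, and letting $t\downarrow0$ produces (\ref{lchar}). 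The main obstacle is thus not conceptual but the bookkeeping of the second paragraph: making sure every pairing occurring in the expansion is legitimate and finite, so that the algebra is valid; once this is settled, the remainder is the textbook convex-minimization argument.
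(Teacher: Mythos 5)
Your proof is correct and follows essentially the same route as the paper: the paper's own argument rests on exactly the quadratic expansion you write (its identity~(\ref{mainin}) with $\mu=\lambda$, $h=t$), takes $h=1$ for sufficiency and $h\to0$ for necessity, and invokes the finiteness of $G_f$ on $\mathcal E_f^\sigma(\Sigma,g)$ together with Lemma~\ref{minuss} for the same bookkeeping you spell out. The only difference is that you make the finiteness verifications explicit where the paper relegates them to a parenthetical remark.
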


\begin{proof}Since $\mathcal E_f^\sigma(\Sigma,g)$ is convex, for any its elements $\nu,\,\mu$ and
$h\in(0,1]$ we get
\begin{equation}\label{mainin}G_f\bigl(h\nu+(1-h)\mu\bigr)-G_f(\mu)=2h\langle
W_f^\mu,\nu-\mu\rangle+h^2\|\nu-\mu\|^2.\end{equation} (It has been
used here that $G_f$ is finite on $\mathcal E_f^\sigma(\Sigma,g)$;
see~(\ref{gf}) and Lemma~\ref{minuss}.) If $\mu=\lambda$ is an
equilibrium measure, then the left (hence, the right) side
of~(\ref{mainin}) is~${}\geqslant0$, which leads to~(\ref{lchar}) by
letting $h\to0$. Conversely, if (\ref{lchar}) holds, then
(\ref{mainin}) with $\mu=\lambda$ and $h=1$ gives $G_f(\nu)\geqslant
G_f(\lambda)$ for all $\nu\in\mathcal E_f^\sigma(\Sigma,g)$, as
required.\end{proof}

\section{Proof of Theorem~\ref{exist}}\label{prooff}

{(a)} \ Fix $(\nu_s)_{s\in S}\subset\mathcal E_f^\sigma(\Sigma,g)$
with the property that $\lim_{s\in
S}\,G_f(\nu_s)=G^\sigma_f(\Sigma,g)$; such a net will be called {\it
minimizing\/}. Then identity (\ref{mainin}) with $h=1/2$ implies
\[\|\nu_s-\nu_d\|^2\leqslant2G_f(\nu_s)+2G_f(\nu_d)-4G^\sigma_f(\Sigma,g)\quad\mbox{for
all \ } s,\,d\in S,\] which establishes the strong fundamentality of
$(\nu_s)_{s\in S}$ when combined with the above definition and
Lemma~\ref{minuss}. Therefore, by Theorem~\ref{lemma:exist'}, it
converges strongly and vaguely to a unique $\nu_0\in\mathcal
E^\sigma(\Sigma,g)$. On account of Lemma~\ref{stronglower}, we thus
get
\begin{equation*}\label{proofi}G^\sigma_f(\Sigma,g)\leqslant G_f(\nu_0)\leqslant\liminf_{s\in
S}\,G_f(\nu_s)=G^\sigma_f(\Sigma,g);\end{equation*}  consequently,
$\nu_0$ is an equilibrium measure~$\lambda^\sigma_\Sigma$.

The uniqueness of~$\lambda^\sigma_\Sigma$ follows in a standard way.
Indeed, if $\lambda,\,\hat\lambda\in\mathcal E_f^\sigma(\Sigma,g)$
are two equilibrium measures, then the sequence
$(\mu_n)_{n\in\mathbb N}$ with $\mu_{2n}=\lambda$ and
$\mu_{2n+1}=\hat\lambda$ is minimizing; therefore, what has just
been proved yields $\lambda=\hat\lambda$ as required.\medskip

\noindent{(b)} \ Under the assumptions of~(b), $\mathcal
E^\sigma(\Sigma,g)\subset\mathcal
E^{\sigma_d}(\Sigma_d,g)\subset\mathcal E^{\sigma_s}(\Sigma_s,g)$
for all $s,\,d\in S$ whenever $s\leqslant d$. Hence,
$G^{\sigma_s}_{f}(\Sigma_s,g)$ increases as $s$ ranges through~$S$
and
\begin{equation}\label{Gfund}G^{\sigma}_f(\Sigma,g)\geqslant
\lim_{s\in S}\,G^{\sigma_s}_f(\Sigma_s,g).\end{equation} By reason
of~(\ref{finite}), this yields $G^{\sigma_s}_f(\Sigma_s,g)<\infty$
for every $s\in S$. Therefore, by~(a), there exists a unique
equilibrium measure $\lambda_s:=\lambda^{\sigma_s}_{\Sigma_s}$.
Since $\lambda_d\in\mathcal E_f^{\sigma_s}(\Sigma_s,g)$ for all
$d\geqslant s$, we conclude from Lemma~\ref{lequiv} that $\langle
W_f^{\lambda_s},\lambda_d- \lambda_s\rangle\geqslant0$ and,
consequently,
\begin{equation}\label{convex}
\|\lambda_d-\lambda_s\|^2\leqslant G^{\sigma_d}_f(\Sigma_d,g)-
G^{\sigma_s}_f(\Sigma_s,g).
\end{equation}
However, as follows from (\ref{Gfund}), the net
$G^{\sigma_s}_f(\Sigma_s,g)$, $s\in S$, is fundamental in~$\mathbb
R$. When combined with~(\ref{convex}), this implies that
$(\lambda_s)_{s\geqslant\ell}$ is strongly fundamental in~$\mathcal
E^{\sigma_\ell}(\Sigma_\ell,g)$ for every $\ell\in S$. Therefore, by
Theorem~\ref{lemma:exist'}, $(\lambda_s)_{s\in S}$ converges
strongly and vaguely to a unique measure $\nu_0$ and
$\nu_0\in\mathcal E^{\sigma_\ell}(\Sigma_\ell,g)$ for every $\ell\in
S$. Since $\nu_0\leqslant\sigma_\ell$ and
$\sigma_\ell-\nu_0\to\sigma-\nu_0$ vaguely as $\ell$ ranges
over~$S$, we get $\nu_0\leqslant\sigma$. Thus, actually
$\nu_0\in\mathcal E^\sigma(\Sigma,g)$ and, by
Lemma~\ref{stronglower},
\[G_f(\Sigma,g)\leqslant G_f(\nu_0)\leqslant\liminf_{s\in S}\,G_f(\lambda_s).\]
Together with~(\ref{Gfund}), this gives (\ref{sigma}) and
$\nu_0=\lambda^{\sigma}_{\Sigma}$, and the proof of~(b) is
complete.\medskip

\noindent{(c)} \ To prove (c), we start by establishing the relation
\begin{equation}
G^\sigma_f(\Sigma,g)=\lim_{K\uparrow\Sigma}\,
G^{\sigma_K}_f(K,g).\label{cont}
\end{equation}
For every $\nu\in\mathcal E_f^\sigma(\Sigma,g)$, write
$\hat{\nu}_K:=\nu_K/\langle g,\nu_K\rangle$. Since, by
\cite[Lemma~1.2.2]{F1},
\[1=\lim_{K\uparrow\Sigma}\,\langle
g,\nu_K\rangle,\quad\langle
f,\nu\rangle=\lim_{K\uparrow\Sigma}\,\langle f,\nu_K\rangle,\quad
\|\nu\|^2=\lim_{K\uparrow\Sigma}\,\|\nu_K\|^2,\] we obtain
\begin{equation}
G_f(\nu)=\lim_{K\uparrow\Sigma}\,G_f(\hat{\nu}_K).\label{4w}\end{equation}
Having fixed $\varepsilon>0$, we also conclude that there exists
$K^0\in\{K\}$ such that $\hat{\nu}_K\in\mathcal
E_f^{(1+\varepsilon)\sigma_K}(K,g)$ for all $K\in\{K\}$ that
follow~$K^0$. This yields
\begin{equation}
G_f(\hat{\nu}_K)\geqslant
G^{(1+\varepsilon)\sigma_K}_f(K,g).\label{www}\end{equation} In view
of the arbitrary choice of~$\nu$, substituting (\ref{www})
into~(\ref{4w}) gives
\[
G^\sigma_f(\Sigma,g)\geqslant
\lim_{K\uparrow\Sigma}\,G^{(1+\varepsilon)\sigma_K}_f(K,g)\geqslant
G^{(1+\varepsilon)\sigma}_f(\Sigma,g),
\]
the latter inequality being a consequence of the monotonicity of
$G^\sigma_f(\cdot,g)$. Letting here $\varepsilon\to0$ and
applying~(b), we obtain
\[G^\sigma_f(\Sigma,g)=\lim_{\varepsilon\to0}\,
\Bigl[\,\lim_{K\uparrow\Sigma}\,
G^{(1+\varepsilon)\sigma_K}_f(K,g)\,\Bigr]=
\lim_{K\uparrow\Sigma}\,G^{\sigma_K}_f(K,g),\] and (\ref{cont}) is
thus proved. Since obviously $\lambda^{\sigma_K}_K\in\mathcal
E_f^\sigma(\Sigma,g)$, relation~(\ref{cont}), in turn, implies that
the net $(\lambda^{\sigma_K}_K)_{K\in\{K\}}$ is minimizing and,
hence, strongly fundamental.

Further, according to (b), for every $K\in\{K\}$ one can choose
$\beta^*_K\in(1,\infty)$ so that $\beta^*_K\downarrow1$ as
$K\uparrow\Sigma$ and, for all $\beta_K\in[1,\beta^*_K]$,
\begin{equation}\label{last1}\lim_{K\in\{K\}}\,
\|\lambda^{\beta_K\sigma_K}_K-\lambda^{\sigma_K}_K\|^2=0,\end{equation}
\begin{equation}\label{last2}\lim_{K\in\{K\}}\,
\bigl[G_f(\lambda^{\beta_K\sigma_K}_K)-
G_f(\lambda^{\sigma_K}_K)\bigr]=0.\end{equation} Then combining
(\ref{cont}) and~(\ref{last2}) gives~(\ref{contnew}), while
(\ref{last1}) together with the strong fundamentality of
$(\lambda^{\sigma_K}_K)_{K\in\{K\}}$ shows that
$(\lambda^{\beta_K\sigma_K}_K)_{K\in\{K\}}$ is strongly fundamental
as well. Hence, according to Theorem~\ref{lemma:exist'}, there
exists a unique~$\nu_0$ which is the strong limit of
$(\lambda^{\beta_K\sigma_K}_K)_{K\in\{K\}}$ and belongs to~$\mathcal
E^{(1+\delta)\sigma}(\Sigma,g)$ for every $\delta>0$; therefore,
$\nu_0\in\mathcal E^\sigma(\Sigma,g)$. On account of
Lemma~\ref{stronglower} and~(\ref{contnew}), this yields
\[G_f^\sigma(\Sigma,g)\leqslant G_f(\nu_0)\leqslant\lim_{s\in
S}\,G_f(\lambda^{\beta_K\sigma_K}_K)=G_f^\sigma(\Sigma,g).\]
Consequently, $\nu_0=\lambda^\sigma_\Sigma$, and the proof is
complete.\qed

\section{Proof of Theorem~\ref{varin}}\label{proof:varin}

Assume (i) to hold. Since $G_f(\lambda)$ is finite, so is $\langle
W^f_\lambda,\lambda\rangle$. We start by showing that
\begin{equation}\label{wfl}W^f_\lambda(x)\geqslant\langle
W^f_\lambda,\lambda\rangle\,g(x)\quad\mbox{n.\,e.~in \
}S_{\sigma-\lambda}.\end{equation} On the contrary, let $C(N)>0$,
where $N:=\bigl\{x\in S_{\sigma-\lambda}:\ W^f_\lambda(x)<\langle
W^f_\lambda,\lambda\rangle\,g(x)\bigr\}$. It follows from
\cite[Th.~4.2]{F1} that then one can choose $n\in\mathbb N$ and a
compact set $K\subset N$ with $C(K)>0$ so that
$W^f_\lambda(x)\bigl/g(x)\leqslant\langle
W^f_\lambda,\lambda\rangle-n^{-1}$ for all $x\in K$. Write
$\tau:=\beta(\sigma-\lambda)_K$, where $\beta:=1\bigl/\langle
g,(\sigma-\lambda)_K\rangle$. Then $\tau$ belongs to~$\mathcal E^+$,
is ${}\ne0$, and
\begin{equation}\label{add1}\langle W^f_\lambda,\tau\rangle<\langle
W^f_\lambda,\lambda\rangle.\end{equation} Since
$\kappa(\lambda,\tau)$ is finite, this yields $\langle
f,\tau\rangle<\infty$. A staightforward verification also shows that
$\tau_h:=(1-h)\lambda+h\tau\leqslant\sigma$ for any $h\in(0,1]$.
Consequently, $\tau_h\in\mathcal E_f^\sigma(\Sigma,g)$ and,
by~Lemma~\ref{lequiv}, $\langle
W^f_\lambda,\tau_h-\lambda\rangle=h\langle
W^f_\lambda,\tau-\lambda\rangle\geqslant0$, which
contradicts~(\ref{add1}).

Thus, according to~(\ref{wfl}), $W^f_\lambda/g(x)$ is bounded from
below n.\,e.~in~$S_{\sigma-\lambda}$; this implies~(\ref{ineq1})
with $w_\lambda=L$, where $L$ is defined by~(\ref{lL}). In turn,
(\ref{ineq1}) yields $L<\infty$, because
$C\bigl(S_{\sigma-\lambda}\cap\Sigma_0\bigr)>0$. Hence,
$\infty>L\geqslant\langle W^f_\lambda,\lambda\rangle>-\infty$.

We proceed by establishing (\ref{ineq2}) with $w_\lambda=L$. Having
denoted (cf.~\cite{R})
\[E^+(w):=\bigl\{x\in\Sigma:\ W^f_\lambda(x)\bigl/g(x)>w\bigr\},\quad
E^-(w):=\bigl\{x\in\Sigma:\ W^f_\lambda(x)\bigl/g(x)<w\bigr\},\]
where $w\in\mathbb R$ is arbitrary, we assume on the contrary that
(\ref{ineq2}) for $w_\lambda=L$ does not hold. Then
$\lambda\bigl(E^+(L)\bigr)>0$; hence,
$\lambda\bigl(E^+(w_1)\bigr)>0$ for some $w_1\in(L,\infty)$.

At the same time, as $w_1>L$, relation (\ref{ineq1}) yields
$(\sigma-\lambda)\bigl(E^-(w_1)\bigr)>0$. Therefore, there is a
compact set $F\subset E^-(w_1)$ such that $\xi:=(\sigma-\lambda)_F$
is nonzero. Since $\xi\in\mathcal E^+$ and $\langle
W^f_\lambda,\xi\rangle\leqslant w_1\langle g,\xi\rangle<\infty$, we
get $\langle f,\xi\rangle<\infty$. A direct verification also shows
that
\[\gamma:=\lambda-\lambda_{E^+(w_1)}+\alpha\xi\leqslant\sigma,\quad\mbox{where
\ }\alpha:=\langle g,\lambda_{E^+(w_1)}\rangle\bigl/\langle
g,\xi\rangle.\] Consequently, $\gamma\in\mathcal
E_f^\sigma(\Sigma,g)$. On the other hand, it also follows from the
above that
\begin{equation*}
\langle W_f^\lambda,\gamma-\lambda\rangle=\langle
W_f^\lambda-w_1g,\gamma-\lambda\rangle=-\langle
W_f^\lambda-w_1g,\lambda_{E^+(w_1)}\rangle+\alpha\langle
W_f^\lambda-w_1g,\xi\rangle<0,\end{equation*} which is, however,
impossible (see~Lemma~\ref{lequiv}). Thus,
$\mbox{(i)}\Rightarrow\mbox{(ii)}$.

Furthermore, since $L$ is finite, (\ref{ineq2}) with $w_\lambda=L$
yields $\ell\leqslant L$. To complete the proof of~(iii), it remains
to observe that $\ell>-\infty$, which is obtained from~(\ref{ineq2})
with $w_\lambda=\ell$ due to the fact that $W^f_\lambda\ne-\infty$
n.\,e.~in~$\Sigma$. Hence, $\mbox{(i)}\Rightarrow\mbox{(iii)}$.

Next, assume $\lambda\in\mathcal E^\sigma(\Sigma,g)$ to
satisfy~(\ref{ineq1}) and~(\ref{ineq2}) for some
$w_\lambda\in\mathbb R$. Then actually $\lambda\in\mathcal
E_f^\sigma(\Sigma,g)$, which is seen from~(\ref{ineq2}) when
integrated with respect to~$\lambda$. Given $\nu\in\mathcal
E_f^\sigma(\Sigma,g)$, we also conclude from~(\ref{ineq1})
and~(\ref{ineq2}) that
\begin{eqnarray*}\bigl\langle
W_f^\lambda,\nu-\lambda\bigr\rangle&\!\!\!\!\!=\!\!\!\!\!&\bigl\langle
W_f^\lambda-w_\lambda\,g,\nu-\lambda\bigr\rangle\\
&\!\!\!\!\!{}=\!\!\!\!\!&\bigl\langle W_f^\lambda-w_\lambda\,
g,\nu_{E^+(w_\lambda)}\bigr\rangle+\bigl\langle
W_f^\lambda-w_\lambda\,
g,(\nu-\sigma)_{E^-(w_\lambda)}\bigr\rangle\geqslant0,\end{eqnarray*}
which establishes (i) according to~Lemma~\ref{lequiv}. Thus,
$\mbox{(ii)}\Rightarrow\mbox{(i)}$.

Since (iii) obviously yields (ii) for any $w_\lambda\in[\ell,L]$,
the proof is complete.\qed

\section{Examples}\label{examples}
The following easily verified fact is used in this section: if
$\lambda_*$ gives a solution to the {\it unconstrained\/}
$f$-weighted minimal energy problem over a closed set $\Sigma_*$,
i.\,e.,
\[\lambda_*\in\mathcal E^+(\Sigma_*,g),\qquad
G_f(\lambda_*)=\min_{\nu\in\mathcal E^+(\Sigma_*,g)}\,G_f(\nu),\]
then $\lambda_*$ also serves as an equilibrium
measure~$\lambda^\sigma_\Sigma$, provided the closed set $\Sigma$
and the constraint $\sigma\in\mathfrak M^+(\Sigma)$ satisfy the
assumptions $S_{\lambda_*}\subset\Sigma\subset\Sigma_*$ and
$\sigma\geqslant\lambda_*$.

In the examples below, the collection of all $w_\lambda$ for whom
both (\ref{ineq1}) and~(\ref{ineq2}) hold forms the whole
non-degenerated interval $[\ell,L]$. The kernels from the examples
are perfect, so that every equilibrium measure is determined
uniquely.

\begin{example}\label{1}Let $\mathrm X=\mathbb R^n$, $n\geqslant3$, $g=1$, $f=0$,
$\kappa(x,y)=|x-y|^{\alpha-n}$, where $\alpha\in(2,n)$ is given, and
let $\Sigma:=S(0,1)\cup S(0,r)$, where $S(0,R):=\{x:\ |x|=R\}$ and
$r<1$. Consider $\sigma\in\mathcal E^+(\Sigma)$ such that
$\sigma_{S(0,1)}$ is the rotationally symmetric probability measure,
while $\sigma_{S(0,r)}$ is an arbitrary nonzero measure. Then
$\lambda^\sigma_\Sigma=\sigma_{S(0,1)}$, because $\sigma_{S(0,1)}$
minimizes $\|\nu\|^2$ among all probability measures supported by
the closed unit ball (see~\cite{L}). Since the potential
of~$\sigma_{S(0,1)}$ takes constant values~$c_1$ and~$c_r$
on~$S(0,1)$ and~$S(0,r)$, respectively, and $c_r>c_1$
(see~\cite{L}), we get $L=c_r>c_1=\ell$.\end{example}

A crucial assumption in Example~\ref{1} is that $\kappa$ does not
satisfy the maximum principle. As is seen from Example~\ref{2}, this
restriction is not necessary in case $f\ne0$.

\begin{example}\label{2}Let $\mathrm X=\mathbb R^n$, $n\geqslant3$, $g=1$,
$\kappa(x,y)=|x-y|^{\alpha-n}$, where $\alpha\in(0,2]$,
$f(x)=|x-a|^{\alpha-n}$, where $a\in S(0,1)$ is fixed, and let
$\lambda_*$ minimize $G_f(\nu)$ among the probability measures
supported by~$S(0,1)$.  Then there are a constant~$q$ and a closed
neighborhood~$U$ of~$a$ on~$S(0,1)$ such that $W^f_{\lambda_*}(x)=q$
n.\,e.~in~$S_{\lambda^*}$ and $W^f_{\lambda_*}\bigl|_U>2q$
(see~\cite{DS2}). We define $\sigma$ to be~$\lambda_*$
on~$S_{\lambda^*}$, any nonzero $\nu\in\mathcal E^+$ on~$U$, and~$0$
elsewhere, and let $\Sigma:=S_{\lambda^*}\cup U$. Then
$\lambda^\sigma_\Sigma=\lambda_*$ and, consequently,
$\ell=q<2q\leqslant L$.\end{example}

\end{document}